\providecommand{\U}[1]{\protect\rule{.1in}{.1in}}
\newtheorem{theorem}{Theorem}
\theoremstyle{plain}
\newtheorem{proposition}{Proposition}
\numberwithin{equation}{section}
\begin{document}
\title[Subexponential constants in the Bohnenblust--Hille inequality]{A closed formula for subexponential constants in the multilinear
Bohnenblust--Hille inequality}
\author{Diana Marcela Serrano-Rodr\'{\i}guez}
\address{Departamento de Matem\'{a}tica, UFPB, Jo\~{a}o Pessoa, PB, Brazil}
\email{dmserrano0@gmail.com }
\subjclass{46G25, 47L22, 47H60}
\keywords{Bohnenblust--Hille inequality}

\begin{abstract}
For the scalar field $\mathbb{K}=\mathbb{R}$ or $\mathbb{C}$, the multilinear
Bohnenblust--Hille inequality asserts that there exists a sequence of positive
scalars $\left(  C_{\mathbb{K},m}\right)  _{m=1}^{\infty}$ such that%
\[
\left(  \sum\limits_{i_{1},\ldots,i_{m}=1}^{N}\left\vert U(e_{i_{^{1}}}%
,\ldots,e_{i_{m}})\right\vert ^{\frac{2m}{m+1}}\right)  ^{\frac{m+1}{2m}}\leq
C_{\mathbb{K},m}\sup_{z_{1},...,z_{m}\in\mathbb{D}^{N}}\left\vert
U(z_{1},...,z_{m})\right\vert
\]
for all $m$-linear form $U:\mathbb{K}^{N}\times\cdots\times\mathbb{K}%
^{N}\rightarrow\mathbb{K}$ and every positive integer $N$, where $\left(
e_{i}\right)  _{i=1}^{N}$ denotes the canonical basis of $\mathbb{K}^{N}$ and
$\mathbb{D}^{N}$ represents the open unit polydisk in $\mathbb{K}^{N}$. Since
its proof in 1931, the estimates for $C_{\mathbb{K},m}$ have been improved in
various papers. In 2012 it was shown that there exist constants $\left(
C_{\mathbb{K},m}\right)  _{m=1}^{\infty}$ with subexponential growth
satisfying the Bohnenblust-Hille inequality. However, these constants were
obtained via a complicated recursive formula. In this paper, among other
results, we obtain a closed (non-recursive) formula for these constants with
subexponential growth.

\end{abstract}
\maketitle

\section{Introduction}

The complex multilinear Bohnenblust--Hille inequality asserts that for every
positive integer $m\geq1$ there exists a sequence of positive scalars
$C_{\mathbb{K},m}\geq1$ such that
\begin{equation}
\left(  \sum\limits_{i_{1},\ldots,i_{m}=1}^{N}\left\vert U(e_{i_{^{1}}}%
,\ldots,e_{i_{m}})\right\vert ^{\frac{2m}{m+1}}\right)  ^{\frac{m+1}{2m}}\leq
C_{\mathbb{K},m}\sup_{z_{1},...,z_{m}\in\mathbb{D}^{N}}\left\vert
U(z_{1},...,z_{m})\right\vert \label{hypp}%
\end{equation}
for all $m$-linear form $U:\mathbb{K}^{N}\times\cdots\times\mathbb{K}%
^{N}\rightarrow\mathbb{K}$ and every positive integer $N$, where $\left(
e_{i}\right)  _{i=1}^{N}$ is the canonical basis of $\mathbb{K}^{N}$ and
$\mathbb{D}^{N}$ is the open unit polydisk in $\mathbb{K}^{N}$. This
inequality was overlooked for some decades but  it was rediscovered some years
ago and, since then, several works and applications have appeared (see
\cite{annals, defant, defant2, jfa, diniz2, ace, jmaa}). It is well-known
(since the original proof of H.F. Bohnenblust and E. Hille) that the power
$\frac{2m}{m+1}$ is sharp; on the other hand the optimal values of the
constants $C_{\mathbb{K},m}$ are not known. In the case of real scalars the
Bohnenblust--Hille inequality is also valid, but with different constants. In
fact it is known that in the real case%
\[
C_{\mathbb{R},2}=\sqrt{2}%
\]
is optimal (see \cite{diniz2}) and, in the complex case,%
\[
C_{\mathbb{C},2}\leq\frac{2}{\sqrt{\pi}}.
\]
The estimates for these constants are becoming more accurate along the time.
For the complex case we have:

\begin{itemize}
\item $C_{\mathbb{C},m}\leq m^{\frac{m+1}{2m}}2^{\frac{m-1}{2}}$ (1931 -
Bohnenblust and Hille \cite{bh}),

\item $C_{\mathbb{C},m}\leq2^{\frac{m-1}{2}}$ (70's - Kaijser \cite{Ka} and
Davie \cite{d}),

\item $C_{\mathbb{C},m}\leq\left(  \frac{2}{\sqrt{\pi}}\right)  ^{m-1}$ (1995
- Queff\'{e}lec \cite{Q}).
\end{itemize}

\bigskip

Although the optimal constants $C_{\mathbb{K},m}$ are not known, some recent
papers have investigated their asymptotical growth (see \cite{jfa, ace}).Very
recently, quite better estimates, with a surprising subexponential growth,
were obtained in \cite{jfa, jmaa} but the recursive way that these constants
were obtained make the presentation of a closed formula a quite difficult
task. One of the main goals of this paper is to present a closed formula for
the constants with subexponential growth obtained in \cite{jfa, jmaa}.

\section{First remarks}

We begin by recalling the Khinchin inequality:

For any $p>0$, there are constants $A_{p},B_{p}>0$ such that%
\begin{equation}
A_{p}\left(  \sum_{n=1}^{\infty}\left\vert a_{n}\right\vert ^{2}\right)
^{\frac{1}{2}}\leq\left(  \int_{0}^{1}\left\vert \sum_{n=1}^{\infty}a_{n}%
r_{n}\left(  t\right)  \right\vert ^{p}dt\right)  ^{\frac{1}{p}}\leq
B_{p}\left(  \sum_{n=1}^{\infty}\left\vert a_{n}\right\vert ^{2}\right)
^{\frac{1}{2}}. \label{olm}%
\end{equation}
regardless of the $\left(  a_{n}\right)  _{n=1}^{\infty}\in l_{2}.$ Above,
$r_{n}$ represents the $n$-th Rademacher function.

From \cite{haag} we know that the best values of $A_{p}$ are%
\begin{equation}
A_{p}=\left\{
\begin{array}
[c]{c}%
\sqrt{2}\left(  \frac{\Gamma((p+1)/2)}{\sqrt{\pi}}\right)  ^{1/p}\text{, if
}p>p_{0}\\
2^{\frac{1}{2}-\frac{1}{p}}\text{, if }p<p_{0},
\end{array}
\right.  \label{kkklll}%
\end{equation}
where $\Gamma$ denotes the Gamma Function and $1<p_{0}<2$ is so that
\[
\Gamma\left(  \frac{p_{0}+1}{2}\right)  =\frac{\sqrt{\pi}}{2}.
\]
Numerical calculations estimate
\[
p_{0}\approx1.847.
\]

The following result appears in \cite{ace}:

\begin{theorem}
\label{2,1}For all positive integers $n$,%
\begin{align*}
C_{%
\mathbb{R}
,2}  &  =2^{\frac{1}{2}},\\
C_{%
\mathbb{R}
,3}  &  =2^{\frac{5}{6}}%
\end{align*}
and%
\[
C_{%
\mathbb{R}
,n}=2^{\frac{1}{2}}\left(  \frac{C_{%
\mathbb{R}
,n-2}}{A_{\frac{2n-4}{n-1}}^{2}}\right)  ^{\frac{n-2}{n}}\text{ for }n>3.
\]
In particular, if $2\leq n\leq14$%
\begin{align*}
C_{%
\mathbb{R}
,n}  &  =2^{\frac{n^{2}+6n-8}{8n}}\text{, if }n\text{ is even}\\
C_{%
\mathbb{R}
,n}  &  =2^{\frac{n^{2}+6n-7}{8n}}\text{, if }n\text{ is odd.}%
\end{align*}

\end{theorem}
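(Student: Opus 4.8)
The plan is to treat the recursive identity together with the two base values $C_{\mathbb{R},2}=2^{1/2}$ and $C_{\mathbb{R},3}=2^{5/6}$ as given (this is the content borrowed from \cite{ace}) and to extract the closed form by first identifying which branch of the optimal Khinchin constant $A_p$ is active. The exponent entering the recursion is $p=p(n):=\frac{2n-4}{n-1}=\frac{2(n-2)}{n-1}$, and a short computation shows that $p(n)$ is strictly increasing in $n$ with $p(14)=\frac{24}{13}\approx1.846$ and $p(15)=\frac{13}{7}\approx1.857$. Since $p_{0}\approx1.847$, I would conclude that $p(n)<p_{0}$ exactly for $4\leq n\leq14$, so that on this range the second line of \eqref{kkklll} applies and $A_{p(n)}=2^{\frac12-\frac1p(n)}$.

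With this branch fixed I would compute $A_{p(n)}^{2}$ explicitly. Since $\frac{2}{p(n)}=\frac{n-1}{n-2}$, one gets $A_{p(n)}^{2}=2^{1-\frac{n-1}{n-2}}=2^{-\frac{1}{n-2}}$, hence $A_{p(n)}^{-2}=2^{\frac{1}{n-2}}$. Substituting into the recursion collapses it to
\[
C_{\mathbb{R},n}=2^{1/2}\left(2^{\frac{1}{n-2}}C_{\mathbb{R},n-2}\right)^{\frac{n-2}{n}}=2^{\frac{1}{2}+\frac{1}{n}}\,C_{\mathbb{R},n-2}^{\frac{n-2}{n}}.
\]
Writing $C_{\mathbb{R},n}=2^{f(n)}$ and taking logarithms to base $2$ linearizes everything into
\[
f(n)=\tfrac12+\tfrac1n+\tfrac{n-2}{n}\,f(n-2),
\]
which is an elementary first-order recursion within each parity class.

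Finally I would verify the two proposed closed forms by induction, treating even and odd $n$ separately since the recursion only links indices of equal parity. For the base cases, $f(2)=\frac{4+12-8}{16}=\frac12$ and $f(3)=\frac{9+18-7}{24}=\frac56$ agree with the given $C_{\mathbb{R},2}$ and $C_{\mathbb{R},3}$. For the inductive step I would plug the candidate $f(n-2)$ into the recursion: in the even case $\frac{n-2}{n}f(n-2)=\frac{(n-2)^{2}+6(n-2)-8}{8n}=\frac{n^{2}+2n-16}{8n}$, and adding $\frac12+\frac1n=\frac{4n+8}{8n}$ yields $f(n)=\frac{n^{2}+6n-8}{8n}$, as claimed; the odd case is identical with the constant $-8$ replaced by $-7$, giving $f(n)=\frac{n^{2}+6n-7}{8n}$.

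The main obstacle is not the algebra of the induction, which is routine, but pinning down the range $2\leq n\leq14$: everything hinges on the simple branch $A_{p}=2^{\frac12-\frac1p}$ of \eqref{kkklll} remaining valid, and this fails as soon as $p(n)\geq p_{0}$, that is, at $n=15$. I would therefore make the threshold comparison $p(14)<p_{0}<p(15)$ the pivotal step, since for $n\geq15$ the Gamma-function branch of $A_{p}$ enters and the clean power-of-two closed form is lost.
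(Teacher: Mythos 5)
Your derivation is correct, but note first that the paper contains no proof of Theorem \ref{2,1} to compare against: the statement, recursion and closed formula alike, is quoted from \cite{ace}. What you have done is fill in the implication that the paper (and its source) leave implicit, namely that the closed formula for $2\leq n\leq14$ follows from the recursion and the two base values. All the steps check out: $p(n)=\frac{2n-4}{n-1}=2-\frac{2}{n-1}$ is increasing, the comparison $p(14)=\frac{24}{13}<p_{0}<\frac{13}{7}=p(15)$ pins down the active branch of \eqref{kkklll}, the simplification $A_{p(n)}^{2}=2^{1-\frac{2}{p(n)}}=2^{-\frac{1}{n-2}}$ collapses the recursion to $f(n)=\frac{1}{2}+\frac{1}{n}+\frac{n-2}{n}f(n-2)$ for the base-$2$ exponent, and the parity-wise induction verifies both quadratic formulas. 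This is also exactly the branch analysis the paper itself performs inside the proof of Proposition \ref{2,2}, where it asserts $p<1.847$ for $2\leq n\leq14$ and $p>p_{0}$ for $n>14$, so your reconstruction is fully consistent with the paper's computations. Two caveats you should make explicit: the recursion and the values $C_{\mathbb{R},2}=\sqrt{2}$, $C_{\mathbb{R},3}=2^{5/6}$ are assumed, not proved --- that is unavoidable, since establishing them requires the Khinchin--Minkowski induction machinery of \cite{ace,jmaa}, and the paper does the same by citation --- and the threshold step rests on the numerical estimate $p_{0}\approx1.847$ (equivalently, on $\Gamma\left(\frac{37}{26}\right)>\frac{\sqrt{\pi}}{2}$), which is the same numerical fact the paper invokes, so it is not a gap relative to the paper's standard of rigor, though it is not a purely algebraic step.
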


The above theorem allows to obtain a closed formula for the constants. It is
shown in \cite{ace} that for an even positive integer $n>14$,
\[
C_{%
\mathbb{R}
,n}=2^{\frac{n+2}{8}}r_{n},
\]
for a certain $r_{n}$ for which numerical computations show that it tends to a
number close to $1.44.$ The formula for $r_{n}$ from \cite{ace} contains a
slight imprecision which affects some decimals of the first constants. Below
we show a correct formula for $r_{n}.$

\begin{proposition}
\label{2,2}If $n>14$ is even$,$ then%
\[
C_{%
\mathbb{R}
,n}=2^{\frac{n+2}{8}}r_{n},
\]
with
\begin{equation}
r_{n}=\frac{\pi^{\frac{\left(  n+14\right)  \left(  n-14\right)  }{8n}}%
}{2^{\frac{\left(  n+12\right)  \left(  n-14\right)  -24}{4n}}.\left[
{\displaystyle\prod\limits_{k=7}^{\frac{n-2}{2}}}
\left(  \Gamma\left(  \frac{6k+1}{4k+2}\right)  \right)  ^{2k+1}\right]
^{\frac{1}{n}}}. \label{wwqa}%
\end{equation}

\end{proposition}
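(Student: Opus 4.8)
The plan is to unfold the recursion of Theorem \ref{2,1} all the way down to the base value $C_{\mathbb{R},14}$, whose closed form is already supplied by that same theorem: taking $n=14$ in the even formula gives $C_{\mathbb{R},14}=2^{\frac{14^{2}+6\cdot14-8}{8\cdot14}}=2^{\frac{272}{112}}=2^{17/7}$. Writing $a_{j}:=C_{\mathbb{R},j}$, the recursion reads $a_{j}=\bigl(2^{1/2}A_{\frac{2j-4}{j-1}}^{-2(j-2)/j}\bigr)\,a_{j-2}^{(j-2)/j}$. The crucial structural point is that the exponents telescope: since $\frac{j-2}{j}\cdot\frac{j-4}{j-2}\cdots\frac{j-2\ell}{j-2\ell+2}=\frac{j-2\ell}{j}$, when $a_{n}$ is expressed in terms of $a_{14}$ each recursion factor carried by an even level $j$ (running from $16$ to $n$) is raised to the single power $\frac{j}{n}$, while $a_{14}$ is raised to $\frac{14}{n}$. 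Hence
\[
a_{n}=\left(\prod_{\substack{j=16\\ j\text{ even}}}^{n}\left(2^{1/2}A_{\frac{2j-4}{j-1}}^{-2(j-2)/j}\right)^{j/n}\right)a_{14}^{14/n},
\]
and each factor simplifies to $2^{j/(2n)}A_{\frac{2j-4}{j-1}}^{-2(j-2)/n}$, while $a_{14}^{14/n}=2^{34/n}$.

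The second step is to insert the correct value of $A_{\frac{2j-4}{j-1}}$, and here the hypothesis $n>14$ is exactly what is required. The argument $\frac{2j-4}{j-1}=2-\frac{2}{j-1}$ is increasing in $j$ and, at the smallest relevant value $j=16$, equals $\frac{28}{15}\approx1.867>p_{0}\approx1.847$; thus for every even $j$ with $16\le j\le n$ the first branch of \eqref{kkklll}, namely $A_{p}=\sqrt{2}\,\bigl(\Gamma((p+1)/2)/\sqrt{\pi}\bigr)^{1/p}$, is the operative one. Reindexing by $j=2k+2$ (so that $k$ runs from $7$ to $\frac{n-2}{2}$) converts the Gamma-argument into $\frac{p+1}{2}=\frac{3j-5}{2(j-1)}=\frac{6k+1}{4k+2}$, together with $\frac1p=\frac{2k+1}{4k}$ and exponent $-\frac{2(j-2)}{n}=-\frac{4k}{n}$, so that each Khinchin factor becomes $2^{-2k/n}\bigl(\Gamma(\frac{6k+1}{4k+2})/\sqrt{\pi}\bigr)^{-(2k+1)/n}$. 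This already exhibits the three ingredients of \eqref{wwqa}.

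It then remains to collect the three types of factors. The powers of $2$ come from $\prod 2^{j/(2n)}$, from $a_{14}^{14/n}=2^{34/n}$, and from the $2^{-2k/n}$ above; using $\sum_{k=8}^{n/2}2k=\frac{n(n+2)}{4}-56$ and $\sum_{k=7}^{(n-2)/2}k=\frac{n(n-2)}{8}-21$ these combine to the exponent $\frac{n+2}{8}-\frac{n^{2}-2n-192}{4n}$, which is precisely $\frac{n+2}{8}$ plus the power of $2$ in $r_{n}$. The $\sqrt{\pi}$ contributions assemble into $\pi^{\frac{1}{2n}\sum_{k=7}^{(n-2)/2}(2k+1)}=\pi^{\frac{(n-14)(n+14)}{8n}}$ after evaluating the arithmetic sum, and the remaining Gamma factors give $\bigl[\prod_{k=7}^{(n-2)/2}\Gamma(\frac{6k+1}{4k+2})^{2k+1}\bigr]^{-1/n}$, matching \eqref{wwqa} term by term. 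The main obstacle is arithmetical rather than conceptual: one must track the three arithmetic-progression sums and their $\tfrac1n$-type prefactors without error, and it is exactly a slip in this bookkeeping that accounts for the imprecision in the earlier formula of \cite{ace}. The sole analytic input beyond algebra is the one-line verification $\frac{28}{15}>p_{0}$, which fixes the branch and explains the threshold $n>14$.
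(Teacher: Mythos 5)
Your proposal is correct and follows essentially the same route as the paper: unfold the recursion of Theorem \ref{2,1}, telescope the exponents down to $j/n$, observe that Haagerup's constant \eqref{kkklll} switches branches exactly between levels $14$ and $16$ (since $\tfrac{24}{13}<p_{0}<\tfrac{28}{15}$), reindex via $j=2k+2$, and sum the three arithmetic progressions to assemble \eqref{wwqa}. The only (cosmetic) difference is that you anchor the unfolding at $C_{\mathbb{R},14}=2^{17/7}$ via the closed formula of Theorem \ref{2,1}, whereas the paper unwinds all the way to $C_{\mathbb{R},2}$ and re-derives the levels $4$--$14$ contribution as explicit powers of $2$; your choice packages that contribution into the single factor $2^{34/n}$, a mild streamlining of the same bookkeeping.
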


\begin{proof}
Using the estimates from Theorem\textit{ \ref{2,1} we have}%
\begin{align*}
C_{%
\mathbb{R}
,4}  &  =2^{\frac{1}{2}}\left(  \frac{C_{%
\mathbb{R}
,2}}{A_{\frac{4}{3}}^{2}}\right)  ^{\frac{2}{4}}\\
C_{%
\mathbb{R}
,6}  &  =2^{\frac{1}{2}}\left(  \frac{2^{\frac{1}{2}}\left(  \frac{C_{%
\mathbb{R}
,2}}{A_{\frac{4}{3}}^{2}}\right)  ^{\frac{2}{4}}}{A_{\frac{8}{5}}^{2}}\right)
^{\frac{4}{6}}=\frac{\left(  2^{\frac{1}{2}+\frac{1}{2}.\frac{4}{6}}\right)
\left(  C_{%
\mathbb{R}
,2}\right)  ^{\frac{2}{4}.\frac{4}{6}}}{\left(  A_{\frac{4}{3}}^{2}\right)
^{\frac{2}{4}.\frac{4}{6}}\left(  A_{\frac{8}{5}}^{2}\right)  ^{\frac{4}{6}}%
}\\
C_{%
\mathbb{R}
,8}  &  =\frac{2^{\frac{1}{2}+\left(  \frac{1}{2}+\frac{1}{2}.\frac{4}%
{6}\right)  \frac{6}{8}}\left(  C_{%
\mathbb{R}
,2}\right)  ^{\frac{2}{4}.\frac{4}{6}.\frac{6}{8}}}{\left(  A_{\frac{4}{3}%
}^{2}\right)  ^{\frac{2}{4}.\frac{4}{6}.\frac{6}{8}}\left(  A_{\frac{8}{5}%
}^{2}\right)  ^{\frac{4}{6}.\frac{6}{8}}\left(  A_{\frac{12}{7}}^{2}\right)
^{\frac{6}{8}}}%
\end{align*}

and so on. Hence%
\begin{equation}
C_{%
\mathbb{R}
,n}=\frac{d_{n}}{s_{n}} \label{555}%
\end{equation}
with
\[
s_{n}=\left(  A_{\frac{4}{3}}^{2}\right)  ^{\frac{2}{4}.\frac{4}{6}%
...\frac{n-2}{n}}\left(  A_{\frac{8}{5}}^{2}\right)  ^{\frac{4}{6}.\frac{6}%
{8}...\frac{n-2}{n}}\left(  A_{\frac{12}{7}}^{2}\right)  ^{\frac{6}{8}%
.\frac{8}{10}...\frac{n-2}{n}}...\left(  A_{\frac{2n-4}{n-1}}^{2}\right)
^{\frac{n-2}{n}}%
\]

and
\[
d_{n}=2^{\frac{1}{2}+\frac{1}{2}\left(  \frac{n-2}{n}\right)  +\frac{1}%
{2}\left(  \frac{n-4}{n}\right)  +\frac{1}{2}\left(  \frac{n-6}{n}\right)
+...+\frac{1}{2}\left(  \frac{n-\left(  n-4\right)  }{n}\right)  }\sqrt
{2}^{\frac{2}{n}}.
\]

For $p=\frac{2n-4}{n-1}$ and $2\leq n\leq14$,$~$we have $p<1.847$. So
\[
A_{p}=2^{\frac{1}{2}-\frac{1}{p}}%
\]

and, for $n>14$, we have $p>p_{0}$ and%
\[
A_{p}=2^{\frac{1}{2}}\left(  \frac{\Gamma\left(  \left(  p+1\right)
/2\right)  }{\sqrt{\pi}}\right)  ^{\frac{1}{p}}.
\]
We thus have
\begin{align*}
s_{n}  &  =\left(  A_{\frac{4}{3}}^{2}\right)  ^{\frac{2}{4}.\frac{4}%
{6}...\frac{n-2}{n}}\left(  A_{\frac{8}{5}}^{2}\right)  ^{\frac{4}{6}.\frac
{6}{8}...\frac{n-2}{n}}...\left(  A_{\frac{24}{13}}^{2}\right)  ^{\frac
{12}{14}.\frac{14}{16}...\frac{n-2}{n}}\left(  A_{\frac{28}{15}}^{2}\right)
^{\frac{14}{16}.\frac{16}{18}...\frac{n-2}{n}}...\left(  A_{\frac{2n-4}{n-1}%
}^{2}\right)  ^{\frac{n-2}{n}}\\
&  =\left(  2^{-\frac{1}{4}}\right)  ^{\frac{2}{\left(  \frac{n}{2}\right)  }%
}\left(  2^{-\frac{1}{8}}\right)  ^{\frac{4}{\left(  \frac{n}{2}\right)  }%
}...\left(  2^{-\frac{1}{24}}\right)  ^{\frac{12}{\left(  \frac{n}{2}\right)
}}\times\\
&  \times\left(  \sqrt{2}\left(  \frac{\Gamma\left(  \frac{43}{30}\right)
}{\sqrt{\pi}}\right)  ^{\frac{15}{28}}\right)  ^{\frac{14}{\frac{n}{2}}%
}...\left(  \sqrt{2}\left(  \frac{\Gamma\left(  \left(  \frac{2n-4}%
{n-1}+1\right)  /2\right)  }{\sqrt{\pi}}\right)  ^{\frac{1}{p}}\right)
^{\frac{n-2}{n}}\\
&  =\left(  2^{-\frac{6}{n}}\right)  \left(  \sqrt{2}\left(  \frac
{\Gamma\left(  \frac{43}{30}\right)  }{\sqrt{\pi}}\right)  ^{\frac{15}{28}%
}\right)  ^{\frac{14}{\frac{n}{2}}}...\left(  \sqrt{2}\left(  \frac
{\Gamma\left(  \left(  \frac{2n-4}{n-1}+1\right)  /2\right)  }{\sqrt{\pi}%
}\right)  ^{\frac{1}{p}}\right)  ^{\frac{n-2}{n}}\\
&  =2^{\frac{\left(  n+12\right)  \left(  n-14\right)  -24}{4n}}\left(
{\displaystyle\prod\limits_{k=7}^{\frac{n-2}{2}}}
\left(  \frac{\Gamma\left(  \frac{6k+1}{4k+2}\right)  }{\sqrt{\pi}}\right)
^{2k+1}\right)  ^{\frac{1}{n}}\\
&  =2^{\frac{\left(  n+12\right)  \left(  n-14\right)  -24}{4n}}\left(
{\displaystyle\prod\limits_{k=7}^{\frac{n-2}{2}}}
\left(  \Gamma\left(  \frac{6k+1}{4k+2}\right)  \right)  ^{2k+1}\right)
^{\frac{1}{n}}\left(  \pi^{\frac{\left(  n+14\right)  \left(  n-14\right)
}{8n}}\right)  ^{-1}\\
&  =\frac{1}{r_{n}}.
\end{align*}

On the other hand a simple calculation shows that%
\[
d_{n}=2^{\frac{n+2}{8}}%
\]
and from (\ref{555}) we obtain%
\[
C_{%
\mathbb{R}
,n}=2^{\frac{n+2}{8}}r_{n}.
\]
\bigskip
\end{proof}

Below we compare the values of the $r_{n}$ from (\ref{wwqa}) and the $r_{n}$
from \cite{ace}:%

\[%
\begin{tabular}
[c]{|l|l|l|}\hline
$n$ & $r_{n}$ (\ref{wwqa}) & $r_{n}$ (\cite{ace})\\\hline
$30$ & $1.387$ & $1.375$\\\hline
$50$ & $1.\,404$ & $1.397$\\\hline
$100$ & $1.\,420$ & $1.416$\\\hline
$250$ & $1.\,431$ & $1.429$\\\hline
$500$ & $1.435$ & $1.434$\\\hline
$1,000$ & $1.\,4374$ & $1.4371$\\\hline
$10,000$ & $1.43989$ & $1.43986$\\\hline
$100,000$ & $1.44021$ & $1.44021$\\\hline
\end{tabular}
\ \ \ \ \
\]

Hence, although the formulas for $r_{n}$ are different its values are very
close and, as in \cite{ace}, numerical estimates indicate that%
\[
\lim_{n\rightarrow\infty}r_{n}\approx1.44025.
\]
We conjecture that%
\[
\lim_{n\rightarrow\infty}r_{n}=\frac{e^{1-\frac{1}{2}\gamma}}{\sqrt{2}},
\]
where $\gamma$ denotes the Euler constant.

\section{Main results}

In \cite{jfa} it was shown that there is a constant $D$ (probably very close
to $1.44$) so that the sequence $\left(  C_{n}\right)  _{n=1}^{\infty}$ given
by%
\begin{align*}
C_{2n}  &  =DC_{n}\\
C_{2n+1}  &  =D\left(  C_{n}\right)  ^{\frac{2n}{4n+2}}\left(  C_{n+1}\right)
^{\frac{2n+2}{4n+2}},
\end{align*}
with%
\[
C_{1}=1\text{ and }C_{2}=\sqrt{2}%
\]
in the real case and
\[
C_{1}=1\text{ and }C_{2}=\frac{2}{\sqrt{\pi}}%
\]
in the complex case, satisfies the Bohnenblust--Hille inequality and,
moreover, this sequence is subexponential. From now on $C_{n}$ will denote the
numbers given by the above formulas.

In this section we present a closed formula for these constants. Given a
positive integer $n$, it is plain that it can be written (in an unique way)
as
\begin{equation}
n=2^{k}-l,\text{ } \label{et}%
\end{equation}
where $k$ is the smaller positive integer such that $2^{k}\geq n$ and $0\leq
l<2^{k-1}$.

\begin{theorem}
If $n\geq3$ is written as (\ref{et}), then%
\begin{equation}
C_{n}=D^{k-1}C_{2}^{\frac{n-l}{n}}\text{, if }l\leq2^{k-2} \label{*}%
\end{equation}
and%
\begin{equation}
C_{n}=D^{\frac{n\left(  k-1\right)  +2^{k-1}-2l}{n}}C_{2}^{\frac{2^{k-1}}{n}%
}\text{, if }2^{k-2}<l<2^{k-1} \label{**}%
\end{equation}
where
\begin{align*}
C_{2}  &  =\sqrt{2}\text{, for real scalars}\\
C_{2}  &  =\frac{2}{\sqrt{\pi}}\text{ for complex scalars}%
\end{align*}

\end{theorem}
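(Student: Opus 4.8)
The plan is to prove the closed formula by strong induction on $n$, exploiting the recursion $C_{2n}=DC_n$ and $C_{2n+1}=D(C_n)^{\frac{2n}{4n+2}}(C_{n+1})^{\frac{2n+2}{4n+2}}$ together with the binary structure encoded in the representation $n=2^k-l$. The base cases $n=3$ (where $k=2$, $l=1$, falling into the range $2^{k-2}<l<2^{k-1}$) and the small even cases must be checked directly against formulas (\ref{*}) and (\ref{**}) using $C_1=1$, $C_2=\sqrt{2}$ or $2/\sqrt{\pi}$. The key observation to set up the induction is that the representation $n=2^k-l$ behaves predictably under $n\mapsto 2n$ and under the two children $n\mapsto \lfloor n/2\rfloor, \lceil n/2\rceil$ appearing in the odd recursion: doubling sends $2^k-l$ to $2^{k+1}-2l$, increasing $k$ by one and doubling $l$, which is exactly the shift one sees comparing the exponents in (\ref{*}).

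First I would handle the even case $n=2m$. Writing $m=2^{k-1}-l$ (so that $n=2^k-2l$ matches (\ref{et}) with the same $l$ doubled appropriately), I apply $C_{2m}=DC_m$ and substitute the inductive formula for $C_m$. The bookkeeping splits according to whether $m$ lands in the ``low'' range $l\le 2^{k-2}$ or the ``high'' range; since doubling $n$ preserves which side of the midpoint $l$ sits on (relative to the rescaled thresholds), the two subcases of the even step map cleanly onto (\ref{*}) and (\ref{**}) respectively, with the power of $D$ incremented by one and the exponent on $C_2$ adjusted from $\frac{m-l}{m}$ to $\frac{n-2l}{n}$ (note $\frac{2m-2l}{2m}=\frac{m-l}{m}$, so the exponent is actually invariant, which is the clean part).

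The genuinely delicate step is the odd case $n=2m+1$, because it blends two consecutive constants $C_m$ and $C_{m+1}$ whose binary representations $2^k-l$ need not share the same $k$. The plan is to compute the representation of $m$ and $m+1$ separately; generically both have the same $k$, but at boundary values (when $m+1$ is a power of two, or when crossing the midpoint threshold $l=2^{k-2}$) the two children fall into different regimes or even different values of $k$, and these boundary cases are where the piecewise formula's two branches interact. I expect this case analysis — tracking exactly when $(C_m)^{\frac{2n}{4n+2}}(C_{m+1})^{\frac{2n+2}{4n+2}}$ combines the low-range and high-range expressions — to be the main obstacle, requiring one to verify that the weighted geometric mean of the two inductive expressions collapses to a single clean power of $D$ times a power of $C_2$, with the exponents reorganizing into the claimed $\frac{n(k-1)+2^{k-1}-2l}{n}$ form. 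Once each subcase is reduced to an identity among the exponents of $D$ and $C_2$, the verification is routine algebra, so the entire difficulty is concentrated in correctly enumerating the regimes of the odd split and confirming the exponent arithmetic matches (\ref{*}) and (\ref{**}) on the nose.
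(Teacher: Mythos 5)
Your proposal takes essentially the same route as the paper's own proof: strong induction through the recursion, a split on the parity of the new index, the even case reducing to invariance of the exponents under doubling ($k\mapsto k+1$, $l\mapsto 2l$), and the odd case handled by enumerating which branch (and which value of $k$) each child $m$, $m+1$ falls into --- exactly the paper's subcases. Two minor slips that would surface in execution: $n=3$ has $l=1=2^{k-2}$, so it belongs to branch (\ref{*}) rather than (\ref{**}) (harmless, since the two formulas coincide when $l=2^{k-2}$), and the children carry \emph{different} values of $k$ precisely when the lower child $m$ is a power of two (the case $l=2^{k-1}-1$), not when $m+1$ is --- if $m+1$ is a power of two, both children share the same $k$ and lie in the low range, needing no special treatment.
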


\begin{proof}
Since $n\geq3$, note that $k\geq2$.

We proceed by induction. Suppose the result valid for all $m\leq$ $n.$

Let
\[
n+1=2^{k}-l
\]
with $l$ and $k$ so that $k$ is the smaller positive integer such that
$2^{k}\geq n+1$ and $0\leq l<2^{k-1}$.

\begin{itemize}
\item \textbf{First Case:} $l$ is even.
\end{itemize}

In this case $n+1$ is even and
\begin{equation}
C_{n+1}=D\left(  C_{\frac{n+1}{2}}\right)  \label{par}%
\end{equation}
with
\[
\frac{n+1}{2}=2^{k-1}-\frac{l}{2}.
\]
\ By induction hypothesis, the result is valid for $C_{\frac{n+1}{2}}.$ We
have two possible subcases for $\frac{l}{2}:$

\textbf{Subcase 1a -}%

\begin{equation}
\frac{l}{2}\leq2^{\left(  k-1\right)  -2}=2^{k-3}. \label{zzz}%
\end{equation}

\textbf{Subcase 1b -}%
\[
2^{\left(  k-1\right)  -2}<\frac{l}{2}<2^{\left(  k-1\right)  -1},
\]
i.e.,%

\begin{equation}
2^{k-3}<\frac{l}{2}<2^{k-2}. \label{zzz2}%
\end{equation}

If (\ref{zzz}) occurs, note that $l\leq2^{k-2},$ from (\ref{par}) we have%
\begin{align*}
C_{n+1}  &  =D\left(  D^{k-2}C_{2}^{\frac{n+1-l}{n+1}}\right) \\
&  =D^{k-1}C_{2}^{\frac{\left(  n+1\right)  -l}{n+1}},
\end{align*}
and this is what we need.

If (\ref{zzz2}) occurs, note that $2^{k-2}<l<2^{k-1}.$ From (\ref{par}) we
have%
\begin{align*}
C_{n+1}  &  =D\left(  C_{\frac{n+1}{2}}\right) \\
&  =D\left(  D^{\frac{\left(  n+1\right)  \left(  k-2\right)  +2^{k-1}%
-2l}{n+1}}C_{2}^{\frac{2^{k-1}}{n+1}}\right) \\
&  =D^{\frac{^{\left(  n+1\right)  \left(  k-1\right)  +2^{k-1}-2l}}{n+1}%
}C_{2}^{\frac{2^{k-1}}{n+1}}%
\end{align*}
and again we get the desired result.

\begin{itemize}
\item \textbf{Second Case:} $l$ is odd.

In this case $n+1$ is odd and
\begin{align}
C_{n+1}  &  =D\left(  C_{\frac{\left(  n+1\right)  -1}{2}}\right)
^{\frac{\frac{\left(  n+1\right)  -1}{2}}{n+1}}\left(  C_{\frac{\left(
n+1\right)  +1}{2}}\right)  ^{\frac{\frac{\left(  n+1\right)  +1}{2}}{n+1}%
}\label{impar}\\
&  =D\left(  C_{\frac{n}{2}}\right)  ^{\frac{\frac{n}{2}}{\left(  n+1\right)
}}\left(  C_{\frac{n+2}{2}}\right)  ^{\frac{\frac{n+2}{2}}{\left(  n+1\right)
}}\nonumber
\end{align}
Since $n+1=2^{k}-l\,$, we have
\[
n=2^{k}-\left(  l+1\right)  ,
\]
and
\[
n+2=2^{k}-\left(  l-1\right)  .
\]
Since $0\leq l<2^{k-1}$, and $l\,\ $is odd, then
\[
0\leq l+1<2^{k-1}\text{, or }l+1=2^{k-1}%
\]
and we have two subcases:
\end{itemize}

\textbf{Subcase 2a -}%

\begin{equation}
n=2^{k-1}-0,\text{ and }~n+2=2^{k}-\left(  l-1\right)  , \label{zzz3}%
\end{equation}
with%
\[
l=2^{k-1}-1
\]

\textbf{Subcase 2b -}
\begin{equation}
~n=2^{k}-\left(  l+1\right)  ,\text{ and }~n+2=2^{k}-\left(  l-1\right)  ,
\label{zzz4}%
\end{equation}
with%
\[
l<2^{k-1}-1.
\]

If (\ref{zzz3}) holds, then $l=2^{k-1}-1.$ Since%
\[
\frac{n}{2}=2^{k-2},
\]
then $C_{\frac{n}{2}}$ is of the form (\ref{*}) and, since%
\[
\frac{n+2}{2}=2^{k-1}-\frac{\left(  l-1\right)  }{2}\text{ and }2^{k-3}%
<\frac{l-1}{2}<2^{k-2},
\]
then $C_{\frac{n+2}{2}}$ is of the form (\ref{**}). We this have%
\[
C_{\frac{n}{2}}=D^{\left(  k-2\right)  -1}C_{2}%
\]
and%
\[
C_{\frac{n+2}{2}}=D^{\frac{\frac{n+2}{2}\left(  k-2\right)  +2^{\left(
k-1\right)  -1}-2\left(  \frac{l-1}{2}\right)  }{\frac{n+2}{2}}}C_{2}%
^{\frac{2^{\left(  k-1\right)  -1}}{\frac{n+2}{2}}}.
\]

From (\ref{impar}), we get%
\begin{align*}
C_{n+1}  &  =D\left(  C_{\frac{n}{2}}\right)  ^{\frac{\frac{n}{2}}{\left(
n+1\right)  }}\left(  C_{\frac{n+2}{2}}\right)  ^{\frac{\frac{n+2}{2}}{\left(
n+1\right)  }}\\
&  =D\left(  D^{k-3}C_{2}\right)  ^{\frac{\frac{n}{2}}{\left(  n+1\right)  }%
}\left(  D^{\frac{\frac{n+2}{2}\left(  k-2\right)  ~+\frac{2^{k-1}}%
{2}~-2\left(  \frac{l-1}{2}\right)  }{\frac{n+2}{2}}}C_{2}^{\frac{2^{k-2}%
}{\frac{n+2}{2}}}\right)  ^{\frac{\frac{n+2}{2}}{\left(  n+1\right)  }}\\
&  =D^{\frac{\left(  n+1\right)  \left(  k-1\right)  +2^{k-1}-2l}{n+1}}%
C_{2}^{\frac{2^{k-1}}{n+1}}%
\end{align*}
and we have the desired result.

In the case that (\ref{zzz4}) holds, we have
\begin{align*}
l+1  &  <2^{k-1},\\
\frac{n}{2}  &  =2^{k-1}-\frac{\left(  l+1\right)  }{2}%
\end{align*}
and
\[
\frac{n+2}{2}=2^{k-1}-\frac{\left(  l-1\right)  }{2}.
\]
We have three sub-subcases:

\textbf{Sub-subcase 2ba -}%

\begin{equation}
2^{k-2}<l+1<2^{k-1},\text{ and }~2^{k-2}<l-1<2^{k-1} \label{zzz5}%
\end{equation}

\textbf{Sub-subcase 2bb -}$~$%
\begin{equation}
2^{k-2}<l+1<2^{k-1},\text{ and }l-1=2^{k-2} \label{zzz6}%
\end{equation}

\textbf{Sub-subcase 2bc -}%
\begin{equation}
~l-1<l+1\leq2^{k-2}. \label{zzz7}%
\end{equation}

If (\ref{zzz5}) holds, note that
\[
2^{k-2}<l-1<l<l+1<2^{k-1},
\]
and this $C_{n+1}$ is of the form (\ref{**}). Therefore
\[
2^{k-3}<\frac{l+1}{2}<2^{k-2}\text{, and~}2^{k-3}<\frac{l-1}{2}<2^{k-2}%
\]
and this $C_{\frac{n}{2}}$ and $C_{\frac{n+2}{2}}\,$are written in the form
(\ref{**}); now, from (\ref{impar}) we have%
\begin{align*}
C_{n+1} &  =D\left(  C_{\frac{n}{2}}\right)  ^{\frac{\frac{n}{2}}{\left(
n+1\right)  }}\left(  C_{\frac{n+2}{2}}\right)  ^{\frac{\frac{n+2}{2}}{\left(
n+1\right)  }}\\
&  =D\left(  D^{\frac{\frac{n}{2}\left(  k-2\right)  +2^{k-2}-2\left(
\frac{l+1}{2}\right)  }{\frac{n}{2}}}C_{2}^{\frac{2^{k-2}}{\frac{n}{2}}%
}\right)  ^{\frac{\frac{n}{2}}{n+1}}\left(  D^{\frac{\frac{n+2}{2}\left(
k-2\right)  +2^{k-2}-2\left(  \frac{l-1}{2}\right)  }{\frac{n+2}{2}}}%
C_{2}^{\frac{2^{k-2}}{\frac{n+2}{2}}}\right)  ^{\frac{\frac{n+2}{2}}{n+1}}\\
&  =D^{\frac{\left(  n+1\right)  \left(  k+1\right)  +2^{k-1}-2l}{n+1}}%
C_{2}^{\frac{2^{k-1}}{n+1}}.
\end{align*}
If (\ref{zzz6}) holds, note that
\[
2^{k-2}=l-1<l<l+1<2^{k-1},
\]
and we need to obtain a formula like (\ref{**}). Since
\[
2^{k-3}<\frac{l+1}{2}<2^{k-2}\text{, and }\frac{l-1}{2}=2^{k-3}%
\]
then $C_{\frac{n}{2}}$ is represented by (\ref{**}) and $C_{\frac{n+2}{2}%
}\,is$ of the form (\ref{*}). So, from (\ref{impar}), we have%
\begin{align*}
C_{n+1} &  =D\left(  D^{\frac{\frac{n}{2}\left(  k-2\right)  +2^{k-2}-2\left(
\frac{l+1}{2}\right)  }{\frac{n}{2}}}C_{2}^{\frac{2^{k-2}}{\frac{n}{2}}%
}\right)  ^{\frac{\frac{n}{2}}{n+1}}\left(  D^{k-2}C_{2}^{\frac{\frac{n+2}%
{2}-2^{k-3}}{\frac{n+2}{2}}}\right)  ^{\frac{\frac{n+2}{2}}{n+1}}\\
&  =D.D^{\frac{\frac{n}{2}\left(  k-2\right)  +2^{k-2}-\left(  l+1\right)
}{n+1}}.D^{\frac{\left(  k-2\right)  \left(  \frac{n+2}{2}\right)  }{n+1}%
}.C_{2}^{\frac{2^{k-2}}{n+1}}.C_{2}^{\frac{\frac{n+2}{2}-2^{k-3}}{n+1}}\\
&  =D^{\frac{\left(  n+1\right)  +\left(  k-2\right)  \left(  n+1\right)
+2^{k-2}-2^{k-2}-2}{n+1}}.C_{2}^{\frac{2^{k-3}+\frac{n+2}{2}}{n+1}}%
\end{align*}
Since
\[
2^{k-1}-2l=2^{k-1}-2\left(  2^{k-2}+1\right)  =-2,
\]
and
\[
\frac{n+2}{2}=2^{k-1}-\left(  \frac{l-1}{2}\right)  =2^{k-1}-2^{k-3},
\]
then%
\[
C_{n+1}=D^{\frac{\left(  n+1\right)  \left(  k-1\right)  +2^{k-1}-2l}{n+1}%
}C_{2}^{\frac{2^{k-1}}{n+1}}.
\]
Finally, if we have (\ref{zzz7}), note that
\[
l-1<l<l+1\leq2^{k-2},
\]
and then $C_{n+1}$ must be of the form (\ref{*}). Hence
\[
\frac{l-1}{2}<\frac{l+1}{2}\leq2^{k-3},
\]
and $C_{\frac{n}{2}}$ and $C_{\frac{n+2}{2}}$ are written in the form of
(\ref{*}). Thus, again using (\ref{impar}) we have
\begin{align*}
C_{n+1} &  =D\left(  D^{k-2}C_{2}^{\frac{\frac{n}{2}-\frac{l+1}{2}}{\frac
{n}{2}}}\right)  ^{\frac{\frac{n}{2}}{n+1}}\left(  D^{k-2}C_{2}^{\frac
{\frac{n+2}{2}-\frac{l-1}{2}}{\frac{n+2}{2}}}\right)  ^{\frac{\frac{n+2}{2}%
}{n+1}}\\
&  =D.D^{\frac{\left(  k-2\right)  \frac{n}{2}}{n+1}}.D^{\frac{\left(
k-2\right)  \frac{n+2}{2}}{n+1}}.C_{2}^{\frac{\frac{n}{2}-\frac{l+1}{2}}{n+1}%
}.C_{2}^{\frac{\frac{n+2}{2}-\frac{l-1}{2}}{n+1}}\\
&  =D^{\frac{\left(  n+1\right)  +\left(  k-2\right)  \left(  \frac{n}%
{2}+\frac{n+2}{2}\right)  }{n+1}}C_{2}^{\frac{2^{k-1}-\left(  \frac{l+1}%
{2}\right)  -\left(  \frac{l+1}{2}\right)  }{n+1}}C_{2}^{\frac{2^{k-1}-\left(
\frac{l-1}{2}\right)  -\left(  \frac{l-1}{2}\right)  }{n+1}}\\
&  =D^{k-1}C_{2}^{\frac{\left(  n+1\right)  -l}{n+1}},
\end{align*}
and the proof is done.
\end{proof}

\end{document}